\documentclass[11pt,leqno]{article}

\usepackage{amssymb,amsmath,amsthm, url,rotating}
 \usepackage{graphicx,epsfig}
 \usepackage{xcolor,graphicx}
  \usepackage{hyperref}
  \usepackage{mathrsfs}
\topmargin -.5in
\textheight 9in
\textwidth 6.5in
\oddsidemargin 0.0in
\evensidemargin 0.0in
\newcommand{\n}{\noindent}

\newcommand{\vp}{\varepsilon}
\newcommand{\bb}[1]{\mathbb{#1}}
\newcommand{\cl}[1]{\mathcal{#1}}

\theoremstyle{plain}
\newtheorem{thm}{Theorem}

\newtheorem{cor}[thm]{Corollary}

\theoremstyle{definition}

\theoremstyle{remark}
\newtheorem{rem}[thm]{Remark}


\setcounter{secnumdepth}{1}
\def\tilde{\widetilde}

\renewcommand{\tilde}{\widetilde}

\def\PP{\bb P}

\def\nl{\nolimits}

\def\PP{\bb P}

\def\tilde{\widetilde}

\renewcommand{\tilde}{\widetilde}

\def\hat{\widehat}
\def\nl{\nolimits}

 \hsize = 138truemm
\vsize = 215truemm
\baselineskip =15pt
\parindent = 5mm
\parskip = 3pt minus 2pt

\font\tenbb=msbm10
\font\fivebb=msbm5
\font\sevenbb=msbm7
\newfam\bbfam
\textfont\bbfam=\tenbb
\scriptfont\bbfam=\sevenbb
\scriptscriptfont\bbfam=\fivebb
\def\bb{\fam\bbfam\tenbb}


\def\vp{\varepsilon}

\def\tilde{\widetilde}



 \def\n{\noindent}

\def\vp{\varepsilon}

\def\PP{{\bb P}}
\def\K{{\cal K}}
\def\tilde{\widetilde}

\def\hat{\widehat}

\begin{document}

\title{On a linearization trick}

\author{by\\
Gilles Pisier \\
Texas A\&M University\\
College Station, TX 77843, U. S. A.}

 \maketitle
\begin{abstract}  
In several situations, mainly involving a self-adjoint set of unitary generators of a $C^*$-algebra, we show that any matrix polynomial in the generators and the unit that is in the open unit ball can be written as a product of matrix polynomials of degree 1 also in the open unit ball.
 \end{abstract}

 	{\bf MSC-class} 46H35, 46L54, 47C15, 15B52, 60B20
	
	{\bf Keywords:} Random matrices, factorization of polynomials, unitary operators
	
{\bf Author's Email:} pisier@math.tamu.edu

\bigskip

\bigskip

 In random matrix theory, especially in connection with estimates
 of the edge of the spectrum of a random matrix, 
 a certain ``linearization trick" has recently played an important role.
 It was introduced in the Gaussian random
  matrix context by Haagerup and Thorbj\o rnsen
 \cite{HT}, who mention in   \cite{HT}
that they were inspired by a similar trick 
from the author's \cite{P}. The latter can be applied,
 among other settings, to unitary random matrices,
in problems about ``strong convergence" considered more recently 
 by Collins and Male  in \cite{CoM}, and Bordenave and Collins
 in \cite{BoCo}.
 Roughly, one wants to estimate
 the limit of the norm of a ``polynomial" $P(x^{(N)}_1,x^{(N)}_2,...;{x^{(N)}_1}^*,{x^{(N)}_2}^*,... ) $ in large unitary random
$N \times N$-matrices and their inverses when  $N\to \infty$ and
to show that the limit is equal to the norm of the same polynomial
$P(x^{\infty}_1,x^{\infty}_2,...;{x^{\infty}_1}^*,{x^{\infty}_2}^*,... ) $ but with the random matrices replaced
by certain unitary matrices $(x^{\infty}_1,x^{\infty}_2,...) $ that play the role of a limiting object. In such situations,
the main difficulty is to prove  $\lim\nl_{N\to \infty} \|P(x^{(N)}_1,x^{(N)}_2,...) \|\le \|P(x^{\infty}_1,x^{\infty}_2,...)\|$
(say almost surely). By homogeneity, this reduces to
$\|P(x^{\infty}_1,x^{\infty}_2,...)\|<1 \Rightarrow \lim_N \|P(x^{(N)}_1,x^{(N)}_2,...) \|<1$.
Computing the norm of such a polynomial   is usually an intractable problem, but
this   is often more accessible
 for polynomials $P$ of degree 1.
Thus if we had a factorization of any $P$ such that
$\|P(x^{\infty}_1,x^{\infty}_2,...)\|<1$
 as a product 
of polynomials of degree 1 satisfying the same bound, the problem 
would be reduced to a more tractable one. 
While the desired factorization seems hopeless with scalar coefficients,
it turns out to be true
if one allows generalized polynomials with matrices as coefficients,
or equivalently matrices with polynomial entries,
the original polynomial being viewed as a matrix of size 1.
In fact it is more natural
to try to factorize general polynomials with matrix   coefficients
in the open unit ball
as products of polynomials of degree 1 in the same ball.
This is the content of our Theorem \ref{main} below, 
a rather simple factorization of matrix valued polynomials
that seems to be a basic fact, of interest in its own right.

 The ``trick" in  \cite{P}    combines very simply facts and
 ideas 
 commonly used in operator space theory,
  involving completely bounded (or completely  positive) maps (see \cite{ER,Pa,P4}).
  
  The recent survey \cite{Mai}   and the book
  \cite{MiS} 
  mention several areas where an analogous trick
  is known in some form (in some cases going back 50 years), but 
  do not mention the operator space
   connection.
  They describe a linearization 
  due to  Anderson  \cite{An}
  in the form of a factorization of matrices
  with polynomial entries, involving the ``Schur complement".
  However, it turns out that, when combined with ideas due to Blecher and Paulsen
  \cite{BP2},
  the operator space viewpoint also produces a very nice factorization
  theorem that seems to be of independent interest.
 This factorization highlights the fact
 that the operator space structure of the \emph{linear}
 span of the generators
 of an operator algebra
 in many cases determines that
 of the whole operator algebra (see \cite{101} for more on this).
 
  In short, the goal of the present note is to advocate
  the resulting operator space version of the linearization trick.
    
    Throughout this note let $H$ be an arbitrary Hilbert space.
   Let $(x_j)$ be a finite family in the Banach algebra $B(H)$ 
   of all bounded operator on $H$; we denote by $1$ the unit in $B(H)$.
   By a monomial in $(x_j,x_j^*)$ we mean
   a product of terms among the collection $\{1, x_j,x_j^*\}$.
   If the product has at most $d$ terms we say that
   the monomial has degree at most $d$.
   By a polynomial in $(x_j,x_j^*)$ (resp. of degree at most $d$)
   we mean a linear combination of monomials  (resp. of degree at most $d$).
  Let $M_{n,m}$ denote the space of $n \times m$ complex matrices.
  We set as usual $M_{n}=M_{n,n}$.  
  By a (rectangular or square) matrix valued polynomial  (resp. of degree at most $d$)
  in $(x_j,x_j^*)$
 we mean a (rectangular or square) matrix
 with entries that are polynomials in $(x_j,x_j^*)$  (resp. of degree at most $d$).
 The norm of an $n \times m$ matrix valued polynomial
 is the operator norm, i.e.  the norm of the associated matrix in $M_{n,m}(B(H))$.
 
 In its simplest form our main result is as follows:
 \begin{thm}\label{main}
 If the $x_j$'s are all unitary operators, any matrix valued polynomial
  in $(x_j,x_j^*)$ 
with norm $<1$ can be written as a finite product
$P_1P_2  \cdots P_m$
of matrix valued  
 polynomials of degree at most 1 with
$\|P_\ell\|<1$ for all $1\le \ell\le m$.
 \end{thm}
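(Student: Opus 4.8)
The plan is to recast the desired factorization in the language of operator spaces and the Haagerup tensor product, so that ``writing $P$ as a product of degree-$1$ factors of norm $<1$'' becomes a membership statement. Let $E=\mathrm{span}\{1,x_j,x_j^*\}\subseteq B(H)$, viewed as an operator space, and let $A$ be the operator algebra it generates. A product $P=P_1P_2\cdots P_m$ of (rectangular) matrix valued polynomials of degree $\le 1$ is exactly the image of the elementary tensor $P_1\otimes\cdots\otimes P_m$ under the iterated multiplication map $\mu\colon E\otimes_h\cdots\otimes_h E\to A$, amplified to matrices. By the theory of operator spaces \cite{ER,Pa,P4} together with the ideas of Blecher--Paulsen \cite{BP2}, multiplication on a unital operator algebra is completely contractive on the Haagerup tensor product; hence any such product with $\|P_\ell\|<1$ for all $\ell$ automatically satisfies $\|P\|<1$. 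This is the trivial half of the correspondence, and the whole theorem amounts to the reverse inclusion: every element $P$ of degree $\le d$ with $\|P\|<1$ lies in the image of the open unit ball of the $d$-fold Haagerup tensor power.

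Anderson's linearization \cite{An} supplies the classical algebraic skeleton: after padding, every matrix valued polynomial of degree $\le d$ is the Schur complement (equivalently, a corner) of an affine pencil whose entries lie in $E$. This is the standard device for reducing degree to $1$, but on its own it produces a corner of a pencil rather than an honest product, and it gives no control of norms — the elementary degree-$1$ blocks it involves have norm $>1$ even when the generators are unitary. The quantitative statement we want must therefore come from the operator space side, through the Haagerup tensor product, with Anderson's identity serving only to guarantee that the degree-$\le d$ part of $A$ really is spanned by products of degree-$1$ elements, so that everything reduces to the bilinear multiplication maps $E\otimes_h E\to A$ and their iterates.

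The heart of the matter, and the step I expect to be the main obstacle, is to prove that the iterated multiplication map is a \emph{complete quotient map} onto the degree-$\le d$ subspace of $A$; equivalently, that on this subspace the operator algebra norm coincides with the norm induced by the Haagerup tensor product. This is precisely where the hypothesis that the $x_j$ are \emph{unitary} is indispensable: the relations $x_j^*x_j=x_jx_j^*=1$ are what let one unwind a product of generators while keeping the relevant row and column norms equal to the operator norm, so that an element of norm $<1$ acquires a genuine factorization with contractive factors. This is the Blecher--Paulsen-style input, and it reflects the principle that for such generators the operator space structure of $E$ determines that of the whole algebra \cite{101}. Granting this complete quotient property, a norm-$<1$ element lifts to a Haagerup tensor of norm $<1$, which unwinds to a matrix product $P=P_1\cdots P_d$ with $\prod_\ell\|P_\ell\|<1$; finally, rescaling by the positive scalars $t_\ell=(\prod_j\|P_j\|)^{1/d}/\|P_\ell\|$, so that $\prod_\ell t_\ell=1$ while $t_\ell\|P_\ell\|=(\prod_j\|P_j\|)^{1/d}<1$, leaves the product unchanged and makes every factor have norm $<1$, completing the proof.
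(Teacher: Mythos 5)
Your proposal correctly identifies the easy direction (products of contractive degree-$1$ factors are contractive) and correctly reformulates the hard direction as a lifting statement: that the iterated multiplication map $E\otimes_h\cdots\otimes_h E\to \cl A$ is a complete quotient map onto the polynomials of degree at most $d$. But that reformulated statement is exactly the step you yourself flag as ``the main obstacle,'' and you never prove it: the only justification offered is that the relations $x_j^*x_j=x_jx_j^*=1$ ``let one unwind a product of generators while keeping the relevant row and column norms equal to the operator norm,'' which is not an argument. The gap is not a technicality. Your complete quotient property would produce a factorization $P=P_1\cdots P_d$ whose length equals the degree $d$, with $\prod_\ell\|P_\ell\|<1$; this is strictly stronger than Theorem \ref{main}, and it is essentially the quantitative question (estimating the minimal length $m(d,n)$, resp. $m(d,n,\vp)$) that the paper poses as \emph{open} in its final section. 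Moreover, the paper explicitly cautions that the Blecher--Paulsen factorization results \cite{BP2} you invoke are proved for maximal/universal operator algebras, and that no lifting argument is known to transfer them to general unitary generators, e.g.\ inside a reduced group $C^*$-algebra --- which is precisely the generality Theorem \ref{main} is meant to cover. So the Haagerup-tensor heuristic is the theorem, not a lemma one may grant. (The appeal to Anderson's linearization is also a red herring: that the degree-$\le d$ part is spanned by products of degree-$1$ elements is trivial, since every monomial is such a product.)

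For comparison, the paper's proof avoids fixing the number of factors in advance, which is what makes it work. In Theorem \ref{26.8} one defines on $\K_0\otimes\cl A$ the norm $\|x\|_\bullet=\inf\prod\|\alpha_\ell\|$ over factorizations $x=\alpha_0D_1\alpha_1\cdots D_m\alpha_m$ of \emph{arbitrary} length $m$, with scalar matrices $\alpha_\ell$ and block-diagonal $D_\ell$ with entries in $\cl S$; Ruan's theorem and the Blecher--Ruan--Sinclair theorem realize $(\|\cdot\|_n)$ as an operator algebra structure, yielding a unital homomorphism $U$ with $\|x\|_\bullet=\|[Id_{\K_0}\otimes U](x)\|$. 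Unitarity then enters not through any row/column bookkeeping but through the linearization trick of \cite{P} (Corollary \ref{c1}(i)): a unital homomorphism $u$ contractive on $\cl S$ extends by Arveson to a unital completely positive map with Stinespring dilation $\pi$; since $u(x_j)$ and $\pi(x_j)$ are both unitary, the corners $\pi_{12}(x_j),\pi_{21}(x_j)$ vanish, so $H$ is $\pi$-invariant, $u$ is a $*$-homomorphism, and $\|u\|_{cb}=1$. Applying this to $U$ gives $\|x\|_\bullet\le\|x\|<1$, hence a factorization of some finite (uncontrolled) length, which Remark \ref{26.10} converts into a product of degree-$1$ polynomials of norm $<1$. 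To salvage your route you would need either to actually prove your complete quotient claim (thereby settling the paper's open question), or to relax it to factorizations of unbounded length --- at which point you are reconstructing the paper's argument.
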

 
 We complete the proof after Remark \ref{r0}.
 
 The statement appearing below as    Corollary \ref{26.8'} 
 is  already in \cite[p. 389]{P4}  (unfortunately the condition on the unit
 is missing there). Theorem \ref{26.8} from which it is deduced
 is implicit there. Both are but  
  a slight generalization of a fundamental factorization result
 due to Blecher and Paulsen \cite{BP2}, itself based  
 on the Blecher-Ruan-Sinclair \cite{BRS}
 characterization of operator algebras. The interest of Theorem \ref{main}
 lies in the fact that it is valid for general unitary operators, in particular
in  the reduced $C^*$-algebra of a group; the 
results of \cite{BP2} are stated for maximal  or universal operator algebras, and 
while one could try a lifting argument to deduce Theorem \ref{main} from them
we do not see how to do this.
 
 For any pair $H_1,H_2$ of Hilbert spaces we denote by $H_1 \otimes_2 H_2$ the Hilbert space
 tensor product.
For any $t\in B(H_1) \otimes B(H_2)$ (algebraic tensor product)
  we   denote simply by $\|t\|_{\min}$,
or more often simply by  $\|t\|$, the norm induced on
$B(H_1) \otimes B(H_2)$  by
$B(H_1 \otimes_2 H_2)$.
By definition, an operator space is a linear subspace
$E\subset B(H)$. Throughout this paper, the space $M_n(E)$ of $n\times n$ matrices with entries in $E$ is always equipped with the norm induced
by $M_n(B(H))=B(H\oplus\cdots\oplus H)$ (with $H$ repeated $n$-times).
We refer to \cite{ER,P4,Pa} for more information on operator space theory.
We just recall that a linear map $u: E_1 \to E_2$ between operator spaces
$E_1\subset B(H_1)$ and $E_2\subset B(H_2)$ is called completely bounded
(c.b. in short) if $\sup\nl_n \|u_n\|<\infty$ where
$u_n: M_n(E_1) \to M_n(E_2)$ is the map taking
 $[a_{ij}]\in M_n(E_1)$ to $[u(a_{ij})]\in M_n(E_2)$, and the corresponding
 norm is defined by
  $\|u\|_{cb}=\sup\nl_n \|u_n\|$.
 
 Let ${\cal
A}\subset B({\cl H})$ be a   unital subalgebra. 
Throughout we identify $M_n({\cal A})$ with $M_n \otimes {\cal A}$. We will identify as usual
$M_n({\cal A})$ with a subset of $M_{n+1}({\cal A})$ 
(by completing a matrix with zero entries).
Then we can think of
$\cup_n M_n({\cal A})$
as a subalgebra of  
$B(\ell_2 ({\cl H}))   $. We equip
$\cup_n M_n({\cal A})$ with its natural operator norm, i.e. the norm induced
on it
 by   $B(\ell_2({\cl H}) )$.\\
 \def\K{{\cl K}}
 For simplicity of notation, we set
 $$\K_0 = \cup_n M_n \subset B(\ell_2),$$
 and we always equip $\K_0 \otimes B({\cl H}) $
 with the norm induced by $B(\ell_2({\cl H}) )$.
 
We will use the identification (as algebras)
 $$ \cup_n M_n({\cal A}) \simeq \K_0 \otimes {\cal A}.$$
 Note  $\K_0 \otimes {\cal A}$ is a subalgebra
 of $B(\ell_2 ({\cl H})) $, generated by
 $(\K_0 \otimes 1_A) \cup (e_{11} \otimes{\cal A})$.\\
We denote by $Id_{E}$ the identity map on a set $E$. \\
  
 
 \begin{thm} \label{26.8} 
 Let $c>0$ be a constant (our main case of interest is $c=1$).\\
 Let ${\cal A}\subset B({\cl H})$ be a unital operator algebra.  
Let ${\cal S}$ be a subset of the unit ball of $ \K_0 \otimes {\cal A}=\cup_n M_n({\cal A})$. We assume that
\begin{equation}\label{j23}
 e_{11} \otimes 1_{\cal A}  \in {\cal S} \end{equation}
  and moreover that 
  $ \K_0 \otimes {\cal A}$  is the algebra generated by $(\K_0 \otimes 1_{\cal A}) \cup{\cal S}  $. \\
Fix an element $x\in \K_0 \otimes {\cal A}$. Then,  the following are equivalent: 

\item{ (i)} For any $H$ and any unital homomorphism $u\colon  {\cal A} \to B(H)$  
  $$\sup\nolimits_{s\in {\cal S}} \|[Id_{\K_0}\otimes u](s)\| \le 1  
  \Rightarrow \|[Id_{\K_0}\otimes u](x)\| <  c.$$

\item{ (ii)} For some $m$ there is  a factorization of the form $x =
\alpha_0D_1\alpha_1\ldots D_m\alpha_m$ where $\alpha_0,\ldots, \alpha_m$
are in $\K_0\otimes 1$ with $\prod_0^m\|\alpha_\ell\|<c$ and where $D_1,\ldots,
D_m$ are elements of $\cup_n M_n({\cl A})=\K_0 \otimes {\cal A}$ represented by block
diagonal matrices of the form
\begin{equation}\label{00} D_{\ell} = \left(\begin{matrix}\boxed{y_1({\ell})}  &&&&\bigcirc\\
& {\boxed{y_2({\ell})}}\\
&&\ddots\\ &&&\ddots\\ \bigcirc&&&&\boxed{y_{N_{\ell}}({\ell})} 
\end{matrix}\right)\end{equation}
with $y_k({\ell})\in {\cal S}$ for all $k$ and ${\ell}$.
\end{thm}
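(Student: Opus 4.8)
The plan is to prove the equivalence $(ii)\Rightarrow(i)$ first, as it is the easy direction, and then tackle $(i)\Rightarrow(ii)$, which is the substantive content resting on the Blecher--Paulsen factorization machinery.

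For $(ii)\Rightarrow(i)$: assume $x=\alpha_0 D_1 \alpha_1 \cdots D_m \alpha_m$ as stated, and fix a unital homomorphism $u\colon {\cal A}\to B(H)$ with $\sup_{s\in{\cal S}}\|[Id_{\K_0}\otimes u](s)\|\le 1$. Since $Id_{\K_0}\otimes u$ is a homomorphism on $\K_0\otimes{\cal A}$, it is multiplicative, so
\begin{equation}\label{eq:plan1}
[Id_{\K_0}\otimes u](x)=[Id_{\K_0}\otimes u](\alpha_0)\,[Id_{\K_0}\otimes u](D_1)\cdots[Id_{\K_0}\otimes u](D_m)\,[Id_{\K_0}\otimes u](\alpha_m).
\end{equation}
Each $\alpha_\ell\in\K_0\otimes 1$ is fixed by $Id_{\K_0}\otimes u$ up to the action on the unit, so its image has norm $\|\alpha_\ell\|$. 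Each $D_\ell$ is block diagonal with blocks $y_k(\ell)\in{\cal S}$, so the operator norm of $[Id_{\K_0}\otimes u](D_\ell)$ is the supremum over $k$ of $\|[Id_{\K_0}\otimes u](y_k(\ell))\|\le 1$ by hypothesis. Submultiplicativity of the operator norm then gives $\|[Id_{\K_0}\otimes u](x)\|\le \prod_0^m\|\alpha_\ell\|<c$, which is exactly $(i)$.

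The hard direction is $(i)\Rightarrow(ii)$, and here the main obstacle is producing the factorization abstractly without access to a concrete homomorphism. The strategy I would follow mirrors the Blecher--Ruan--Sinclair / Blecher--Paulsen approach: condition $(i)$ says that $x$ lies in the open $c$-ball for \emph{every} representation in which the generating set ${\cal S}$ is contractive. I would introduce the universal object governing such representations --- an operator algebra structure on the algebra generated by $(\K_0\otimes 1_{\cal A})\cup{\cal S}$ in which ${\cal S}$ is forced into the unit ball, obtained by taking the supremum of the matrix norms over all admissible homomorphisms $u$. Condition $(i)$ then precisely asserts that the image of $x$ in this universal completion has norm strictly below $c$. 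The key structural input is that multiplication $E\otimes E\to A$ (where $E$ is the linear span of the generators including the unit) is completely contractive in the relevant universal norm; this is the Blecher--Paulsen fact that lets one factor any element of the open unit ball of an operator algebra through finitely many copies of the generating operator space, inserting the scalar (here $\K_0\otimes 1$) amplitudes $\alpha_\ell$ to absorb the matrix coefficients. Writing the universal norm estimate back as a concrete product and grouping the generator-factors into the block-diagonal $D_\ell$'s --- the block structure arising from the matrix amplification $\K_0$ --- yields the desired form, with the strict inequality $\prod\|\alpha_\ell\|<c$ inherited from $\|x\|<c$ in the universal algebra.

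I expect the delicate points to be bookkeeping rather than conceptual: verifying that the supremum defining the universal norm is attained or at least approximated well enough to keep all inequalities strict, and checking that the generators $y_k(\ell)$ can always be arranged in the block-diagonal pattern of \eqref{00} rather than a more general linear combination. The inclusion of $e_{11}\otimes 1_{\cal A}\in{\cal S}$ in hypothesis \eqref{j23} is what guarantees the unit is available as a degree-one building block, so that padding and the insertion of units between factors stays inside the prescribed class; I would use this repeatedly to normalize the factorization length and to realize the $\alpha_\ell$ as genuine elements of $\K_0\otimes 1$.
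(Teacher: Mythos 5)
Your proof of (ii)$\Rightarrow$(i) is correct and complete; this is the direction the paper dismisses as obvious, and your argument (multiplicativity of $Id_{\K_0}\otimes u$, unitality giving $\|[Id_{\K_0}\otimes u](\alpha_\ell)\|=\|\alpha_\ell\|$, and the norm of a block-diagonal matrix being the maximum of the blocks) is exactly right. The genuine gap is in (i)$\Rightarrow$(ii), and it sits precisely where you placed your black box. You define a universal norm as the supremum over admissible homomorphisms and then invoke ``the Blecher--Paulsen fact'' to factor any element of the open $c$-ball of the resulting universal algebra through block-diagonal matrices with entries in ${\cal S}$. But that ``fact'' is the very statement to be proved: the theorem of \cite{BP2} concerns the universal operator algebra of an operator \emph{space} $E$, whereas here ${\cal S}$ is a mere \emph{set} of matrices of varying sizes over an algebra ${\cal A}$ that already carries relations, and your universal completion is not such a universal algebra (at best a quotient of one, and it is not clear how to push a factorization down through such a quotient while keeping the diagonal entries in ${\cal S}$ itself; the paper explicitly describes its results as a generalization of \cite{BP2} and says it does not see how to deduce them from \cite{BP2} by lifting). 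Note also that the structural input you isolate --- complete contractivity of multiplication in the universal norm --- is the \emph{hypothesis} of the Blecher--Ruan--Sinclair theorem; it holds trivially for your sup-norm, since each $Id_{\K_0}\otimes u$ is a homomorphism into some $B(\ell_2(H))$, and by itself it yields no factorization whatsoever. As written, your argument assumes its conclusion.

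The paper closes this gap by running the construction in the opposite direction, and this is how your outline would have to be repaired. Define on $\K_0\otimes{\cal A}$ the \emph{factorization} norm $\|x\|_\bullet=\inf\prod_0^m\|\alpha_\ell\|$, the infimum over all factorizations of the form (ii); an algebraic lemma (diagonal sums, sums, and $\K_0$-multiples of factorizable elements are factorizable, which together with the generation hypothesis and \eqref{j23} gives $\cl F=\K_0\otimes{\cal A}$) shows this norm is finite everywhere --- a point your sup-norm construction also needs and never addresses. One then checks that $\|\cdot\|_\bullet$ satisfies Ruan's axioms and is submultiplicative, so Ruan's theorem plus Blecher--Ruan--Sinclair produce a \emph{single} unital homomorphism $U:{\cal A}\to B(\mathscr{H})$ with $\|[Id_{\K_0}\otimes U](y)\|=\|y\|_\bullet$ for all $y$. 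Since each $s\in{\cal S}$ trivially has $\|s\|_\bullet\le 1$, this $U$ is admissible; applying hypothesis (i) to this one representation gives $\|x\|_\bullet<c$, which by the very definition of $\|\cdot\|_\bullet$ \emph{is} statement (ii), with the strict inequality free of charge. This also resolves the loose end you flagged: condition (i) gives $\|[Id_{\K_0}\otimes u](x)\|<c$ for each admissible $u$ separately, so the supremum is a priori only $\le c$; strictness requires applying (i) to one representation dominating all others (a direct sum over a representative set, or the BRS realization above). In short, the only way I see to complete your sketch is to prove that your sup-norm equals the factorization norm, and that proof is the paper's proof.
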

\begin{rem}\label{p26.9} Observe that any $D_{\ell}$ as above is
the product of $N_{\ell}$ factors of the same form but
with all   diagonal coefficients but one equal to $1$. 
Moreover, we can insert  additional $\alpha$ factors in order to rearrange the diagonal terms by means of a conjugation by a permutation matrix. We then
obtain, for a possibly larger length $m$, a  factorization as in (ii) above 
such that whenever  $N_{\ell}>1$ we have $y_2({\ell})=\cdots=y_{N_{\ell}}({\ell})=[1]$ (matrix of size $1\times 1$).\end{rem}
\def\b{{\bullet}}
 \begin{proof}  We start by some preliminaries.
 Let $\cl F$ denote the set of $x\in  \K_0\otimes {\cal A}$
 that admit a factorization $x=\alpha_0D_1\alpha_1\ldots D_m\alpha_m$
 with $\alpha_{\ell} \in \K_0\otimes 1 $ and $D_{\ell}$ as in \eqref{00}.
 We claim that $\cl F=   \K_0\otimes {\cal A} $.
 It is easy to check that 
  if $x,y \in \cl F$   then $  \left( \begin{matrix}   x & 0\\
 0&y
 \end{matrix}\right)$ also belongs to $\cl F$ if $x,y$ admit factorizations with the same $m$. Since we may add 
 diagonal factors with entries equal to $1_{\cl A}$ (which  by \eqref{j23} are of the form \eqref{00}) to equalize the $m$'s if necessary,
 this last condition can always be assumed.
 Moreover, it is obvious that $x\in \cl F$ implies
 $\alpha_0 x\alpha_1 \in \cl F$ for any $\alpha_0, \alpha_1\in \K_0$.
 Therefore, if $x,y \in \cl F$   then $x+y=  ( 1 \ 1 )\left( \begin{matrix}   x & 0\\
 0&y
 \end{matrix}\right) \left( \begin{matrix}   1\\
1
 \end{matrix}\right) \in \cl F$. Now  since the assumption that $\cl S$ and $\K_0\otimes 1$ 
jointly generate  $\K_0\otimes \cl A$   implies
that any $x\in \K_0\otimes \cl A$ is a finite sum of elements of $\cl F$, the claim follows.

We will now equip ${\cal A}$ with an operator space (and actually operator algebra) structure.
We introduce on $\K_0\otimes {\cal A}=\cup M_n({\cal A})$ the norm
$\|x\|_\bullet =\inf \prod_0^m\|\alpha_{\ell}\|$ where the infimum runs over all
factorizations as in (ii). The  preceding claim guarantees that
$\|x\|_\b <\infty$ for any $ x\in \K_0\otimes {\cal A}$. Obviously (using the preceding equalization of the $m$'s)
 \begin{equation}\label{u}
 \forall x,y \in \K_0\otimes {\cal A} \quad
  \left\|   \left( \begin{matrix}   x & 0\\
 0&y
 \end{matrix}\right) \right\|_\b=\max\{ \|x\|_\b ,\|y\|_\b\} \text{  and  }  
  \|xy\|_\b \le \|x\|_\b \|y\|_\b   .
 \end{equation}
For any $x\in M_n\otimes  \cl A=M_n( \cl A)$, let $\|x\|_n= \|x\|_\b$. Then we have 
  \begin{equation}\label{un} \|x\|_{M_n({\cal A})}\le \|x\|_n.\end{equation}
By Ruan's theorem \cite{[Ru1]} (see also \cite{Pa,P4}),
the sequence of norms $(\|. \|_n)$ defines an operator space structure
on $\cl A$. The case $n=1$ defines a norm on $\cl A$
for which by \eqref{un} and our assumption \eqref{j23} on the unit we have
 $\|[1]\|_{1}=\| e_{11} \otimes 1_{{\cal A}}   \|_{\b}=1$.
By \eqref{u}, for any $x,y\in M_n({\cal A})$,  we have
$\| x \odot  y \|_n \le \| x  \|_n  \| y \|_n $ where $\odot$ is the natural product
in the algebra $M_n({\cal A})$, namely
$[x \odot  y]_{ij} =\sum_k x_{ik} y_{kj}$.  
After completion,  
 by the Blecher-Ruan-Sinclair Theorem   \cite{BRS}  (see also \cite{Pa,P4,BLM}),
 $\cl A$ becomes a unital operator algebra $B$ embedded completely isometrically 
 as a unital subalgebra in 
 $B(\mathscr{H})$ for some $\mathscr{H}$ (see also \cite[p. 109]{P4}).
  Let $U: \cl A \to B(\mathscr{H})$ be the resulting unital homomorphism.
  Then  $$ \forall y\in M_n(\cl A) \quad \|y\|_n =\|y\|_\b = \| [Id_{M_n}\otimes U](y)\|_{M_n(B(\mathscr{H}))}.$$
  Equivalently
  $$ \forall y\in \K_0 \otimes \cl A \quad \|y\|_\b   = \| [Id_{\K_0}\otimes U](y)\|.$$
  Let    $s\in \cl S$, 
  obviously $\|s\|_\b\le 1$.
  Therefore $\sup\nl_{s\in \cl S}\|[I_{\K_0}\otimes U](s)\|\le 1$.
  
  Now  let us fix $x$ and assume (i). 
  Then taking $u=U$ we find $\|x\|_\b   = \| [Id_{\K_0}\otimes U](x)\| <c$.
  By definition of $\|. \|_\b$, (ii) follows.
  Thus (i) implies (ii). The converse is obvious.
\end{proof} 
 \begin{cor} \label{26.8'} 
 Let $c\ge 1$ be a constant (our main case of interest is $c=1$).\\
 Let ${\cal
A}\subset B({\cl H})$ be a unital operator algebra.  
Let ${\cal S}$ be a subset of the unit ball of $\cup_n M_n({\cal A})$. We assume \eqref{j23}
  and again  that 
  $ \K_0 \otimes {\cal A}$  is the algebra generated by $(\K_0 \otimes 1_A) \cup{\cal S}  $. Then,  the following are equivalent: 
\item{ (i)} Any unital homomorphism $u\colon \ {\cal A} \to B(H)$ such that
$\sup\nolimits_{x\in {\cal S}} \|[I_{\K_0}\otimes u](x)\| \le  1$ is c.b.\
and satisfies $\|u\|_{cb}\le c$.

\item{ (ii)} For any $n$, any $x$ in $M_n({\cal A})$ with $\|x\|_{M_n(A)}<1$
admits (for some $m=m(n,x)$) a factorization of the form $x =
\alpha_0D_1\alpha_1\ldots D_m\alpha_m$ where $\alpha_0,\ldots, \alpha_m$
are in ${\K_0}\otimes 1$ with $\prod\|\alpha_{\ell}\|<c$ and where $D_1,\ldots,
D_m$ are elements of ${\K_0}\otimes {\cal
A}$
of the form  \eqref{00}.
  \end{cor}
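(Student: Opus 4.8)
The plan is to recognize that Corollary \ref{26.8'} is nothing but Theorem \ref{26.8} applied simultaneously to every element $x$ in the open unit ball of every $M_n({\cal A})$. The bridge between the two statements is the elementary observation that, for a fixed unital homomorphism $u$, the cb-norm bound $\|u\|_{cb}\le c$ is equivalent to the family of strict bounds $\|[Id_{\K_0}\otimes u](x)\|<c$ taken over all $n$ and all $x\in M_n({\cal A})$ with $\|x\|_{M_n({\cal A})}<1$. Indeed, $\|u\|_{cb}\le c$ yields $\|[Id_{\K_0}\otimes u](x)\|\le c\|x\|<c$ at once, while conversely the strict bounds give, after rescaling $x$ by a factor $t<1$ and letting $t\to 1^-$, that $\|u_n\|\le c$ for every $n$. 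Granting this, condition (i) of the corollary says precisely that hypothesis (i) of Theorem \ref{26.8} holds for every $x$ in the open unit ball of every $M_n({\cal A})$, and condition (ii) of the corollary is literally the assertion that conclusion (ii) of Theorem \ref{26.8} holds for every such $x$. The corollary will then follow by invoking Theorem \ref{26.8} once for each such $x$.

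For the implication (i)$\Rightarrow$(ii), I would fix $n$ and $x\in M_n({\cal A})$ with $\|x\|_{M_n({\cal A})}<1$, regard $x$ as an element of $\K_0\otimes{\cal A}$, and verify hypothesis (i) of Theorem \ref{26.8} for it. Let $u\colon{\cal A}\to B(H)$ be any unital homomorphism with $\sup_{s\in{\cal S}}\|[Id_{\K_0}\otimes u](s)\|\le 1$. By hypothesis (i) of the corollary such a $u$ is c.b.\ with $\|u\|_{cb}\le c$, whence $\|[Id_{\K_0}\otimes u](x)\|\le\|u\|_{cb}\,\|x\|_{M_n({\cal A})}\le c\|x\|<c$. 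Thus Theorem \ref{26.8}(i) holds for $x$, and Theorem \ref{26.8} delivers a factorization $x=\alpha_0D_1\alpha_1\cdots D_m\alpha_m$ of the required form with $\prod_0^m\|\alpha_\ell\|<c$, which is exactly condition (ii) of the corollary.

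For the converse (ii)$\Rightarrow$(i), I would take an arbitrary unital homomorphism $u$ with $\sup_{s\in{\cal S}}\|[Id_{\K_0}\otimes u](s)\|\le 1$ and estimate $\|u_n\|$ for each $n$. Given $x\in M_n({\cal A})$ with $\|x\|<1$, condition (ii) provides a factorization $x=\alpha_0D_1\alpha_1\cdots D_m\alpha_m$ with $\alpha_\ell\in\K_0\otimes 1$ and $D_\ell$ block diagonal as in \eqref{00}. Applying $Id_{\K_0}\otimes u$ and using that $u$ is unital (so the $\alpha_\ell$ are left unchanged) turns each $D_\ell$ into a block diagonal matrix whose blocks are the $[Id_{\K_0}\otimes u](y_k(\ell))$ with $y_k(\ell)\in{\cal S}$; since the norm of a block diagonal operator is the supremum of the norms of its blocks and each such block has norm $\le 1$, we get $\|[Id_{\K_0}\otimes u](D_\ell)\|\le 1$. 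Submultiplicativity then gives $\|[Id_{\K_0}\otimes u](x)\|\le\prod_0^m\|\alpha_\ell\|<c$. As $x$ ranges over the open unit ball of $M_n({\cal A})$ and $n$ over all integers, the homogeneity observation above upgrades these strict bounds to $\|u\|_{cb}\le c$, which is condition (i).

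The only genuinely non-formal point, and the place to be careful, is this passage between the two formulations of the cb-norm bound: the factorizations produce only the strict inequality $\|[Id_{\K_0}\otimes u](x)\|<c$ on the open unit ball, and one must rescale and pass to the limit to recover the non-strict bound $\|u\|_{cb}\le c$ on the closed unit ball. Everything else is a direct transcription of Theorem \ref{26.8}. It is also worth noting that the hypothesis $c\ge 1$ is what makes the statement non-vacuous, since a unital homomorphism automatically satisfies $\|u\|_{cb}\ge 1$; the argument itself, however, uses only $c>0$.
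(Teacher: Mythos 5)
Your proof is correct and is essentially the intended argument: the paper states Corollary \ref{26.8'} without a separate proof, as an immediate consequence of Theorem \ref{26.8}, and your deduction---applying the theorem to each fixed $x$ in the open unit ball of each $M_n({\cal A})$, using the rescaling observation to pass between the strict bounds $\|[Id_{\K_0}\otimes u](x)\|<c$ on the open ball and the estimate $\|u\|_{cb}\le c$, and checking the easy converse by submultiplicativity on the factorization---is exactly that deduction. Your attention to the strict-versus-non-strict inequality issue, which the paper leaves implicit, is the only point where care is genuinely needed, and you handle it correctly.
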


\begin{rem}\label{26.9}
Assume (this is the main case of interest for us) 
that $c=1$, 
and that
$\cl S$ is stable by taking
  block diagonal sums of the form \eqref{00}
with diagonal coefficients in $\cl S$.
Then the factorization in the preceding Corollary \ref{26.8'} can be stated
just like this:\\
Any $x\in M_n(\cl A)$ with $\|x\|<1$ can be written
as a product
\begin{equation}\label{001} x =
\alpha_0D_1\alpha_1\ldots D_m\alpha_m\end{equation} 
with all $D_{\ell}$ in $\cl S$ (of varying sizes)
where the $\alpha_{\ell}$'s are rectangular matrices (of suitable sizes
for the product to make sense, see below)
and   $\|\alpha_{\ell}\|<1$ for all ${\ell}$.
The last point can be adjusted by homogeneity.\\
For the product in \eqref{001} to make sense, we set $N_0=N_{m+1}=n$
and we
implicitly assume that $D_{\ell}$ is of size $N_{\ell} \times N_{\ell}$
and 
$\alpha_{\ell} $  
 of size $N_{\ell} \times N_{{\ell}+1}$.
 Assume $0\in \cl S$ which is harmless. Then  
 we may  add zero entries to the $D_{\ell}$'s  in order to achieve $N_1=\cdots=N_m$.
 Once this is done $\alpha_0 $  and $\alpha_m $  will be the only remaining possibly still rectangular factors.
\end{rem}
\begin{rem}\label{26.10} 
Assume moreover 
that,  whenever it makes sense,  the product $\alpha_0 D \alpha_1$
is  in $\cl S$ for any $D  \in \cl S$  and any pair of matrices $\alpha_0, \alpha_1$
with scalar entries in the open unit ball.
 Then the conclusion can be simplified: any $x\in M_n(\cl A)$ with $\|x\|<1$ can be written
as a product
 \begin{equation}\label{001'} x =
 P_1\ \ldots P_m  \end{equation}
with $P_{\ell} \in \cl S$ for all ${\ell}$.
\end{rem}
\begin{cor}\label{c1}
The factorization described in   \eqref{001} holds in the following cases:
\begin{itemize}
\item[{\rm (i)}] Let $A$ be a unital $C^*$-algebra generated by 
a family of unitaries $(x_j)_{j\ge 1}$.
Let $\cl A$ be the unital $*$-algebra generated by $(x_j)_{j\ge 1}$.
Let ${\cal S}$ be the set of
all $x\in \cup M_n({\cal A})$ with $\|x\|\le 1$
of the form either  
 \begin{equation}\label{01} x =a_0\otimes 1+ \sum\nl_{j\ge 1} a_j \otimes x_j\end{equation}
 or
 $$  x= a_0\otimes 1 + \sum\nl_{j\ge 1} a_j \otimes x^*_j$$
where, for some $n$, $j\mapsto a_j$  ($j\ge 0$) is  finitely supported  
with values in $M_n$.
\item[{\rm (ii)}] Let $A$ be a unital $C^*$-algebra generated by 
a   family   $(x_j)_{j\ge 1}$ with only finitely many non-zero elements.
Let $\cl A$ be the unital $*$-algebra generated by $(x_j)_{j\ge 1}$.
Let ${\cal S}$ be the set of
all $x\in \cup M_n({\cal A})$ with $\|x\|\le 1$
of the form
 \begin{equation}\label{02}x=a_0\otimes 1+ \sum\nl_{j\ge 1} a_j \otimes x_j+
 \sum\nl_{j\ge 1} b_j \otimes x^*_j+ b \otimes (\sum x^*_jx_j +x_jx^*_j )\end{equation}
where, for some $n$, we have $ a_0,a_j,b_j,b\in M_n$.
\item[{\rm (iii)}] In the same situation as (ii), let ${\cal S}$ be the set of
all $x\in \cup M_n({\cal A})$ such that $x=x^*$ with $\|x\|\le 1$
of the form \eqref{02}.
\item[{\rm (iv)}] In the same situation as (ii), let ${\cal S}$ be the set of
all $x\in \cup M_n({\cal A})$ such that $x=x^*$ with $\|x\|\le 1$
of the form   \begin{equation}\label{3'} x=  a_0  \otimes 1+\sum  a_j  \otimes x_j
+ \sum b_j  \otimes x^*_j + b  \otimes (\sum x^*_jx_j +x_jx^*_j ) ,\end{equation}
where, for some $n$,  we have
   $  a_0,  a_j, b_j ,b\in M_n$   
such that  $ a_0= a_0^*$, $b_j= a_j^*$ for all $j\ge 1$, and  $b=b^*$.
\item[{\rm (v)}] Let $A$ be a unital $C^*$-algebra generated by 
a family of unitaries $(x_j)_{j\ge 1}$.
 Let $\cl A$ be the unital $*$-algebra generated by $(x_j)_{j\ge 1}$.
Let ${\cal S}$ be the set of
all $x\in \cup M_n({\cal A})$ with $\|x\|\le 1$
of the form   \begin{equation}\label{3} x=  a_0  \otimes 1+\sum  a_j  \otimes x_j
+ \sum b_j  \otimes x^*_j,\end{equation}
where, for some $n$,  we have
  $  a_0,  a_j, b_j\in M_n$ 
such that  $a_0=a_0^*$, $b_j=a_j^*$ for all $j\ge 1$ and
$j\mapsto a_j\in M_n$   is finitely supported.

\end{itemize}
 \end{cor}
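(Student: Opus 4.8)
My plan is to obtain every case from Corollary \ref{26.8'} with $c=1$, followed by the reformulation in Remark \ref{26.9}. Thus in each case I must verify the three standing hypotheses — that $\cl S$ lies in the unit ball of $\cup_nM_n(\cl A)$, that $e_{11}\otimes1_{\cl A}\in\cl S$, and that $(\K_0\otimes1)\cup\cl S$ generates $\K_0\otimes\cl A$ — together with the stability of $\cl S$ under block-diagonal sums, and finally the substantive hypothesis (i) of Corollary \ref{26.8'}. The first three are immediate: in each case $\cl S$ is the norm-$\le1$ part of a fixed degree-$\le1$ template, so taking $a_0=e_{11}$ and all remaining coefficients $0$ gives $e_{11}\otimes1$, generation holds because the templates contain the $x_j$ and $x_j^*$, and a block-diagonal sum of two templates is again a template of norm the maximum of the two. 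The only case where $\cl S$ is not literally closed under (mixed) block-diagonal sums is (i), where $\cl S$ is the union of an analytic and an anti-analytic family; there I would instead invoke Remark \ref{p26.9} to reduce each diagonal factor to a single non-unit entry padded by $1$'s, which stays in $\cl S$.

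Everything therefore rests on hypothesis (i): if $u\colon\cl A\to B(H)$ is a unital homomorphism with $\sup_{s\in\cl S}\|[Id_{\K_0}\otimes u](s)\|\le1$, then $\|u\|_{cb}\le1$. The uniform strategy (for cases (ii)--(v); case (i), lacking a self-adjoint template, needs the variant below) is: recognise that this supremum condition says precisely that $u$ is completely contractive on the unital span $E$ of the template generators; extend $u|_E$ to a unital completely positive map $\varphi\colon A\to B(H)$ with $\varphi|_E=u|_E$; and then use Choi's multiplicative-domain theorem to show the $x_j$ lie in the multiplicative domain of $\varphi$, so that $\varphi$ is a $*$-homomorphism and $u=\varphi|_{\cl A}$ is completely contractive. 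In cases (ii)--(iv) the span $E=\mathrm{span}\{1,x_j,x_j^*,\ \sum_j(x_j^*x_j+x_jx_j^*)\}$ is a self-adjoint operator system, so a unital complete contraction on it is automatically completely positive and extends by Arveson's theorem; in the self-adjoint cases (iii)--(v) (note that (iv) is merely the explicit self-adjoint form of (iii)'s elements, so the two coincide as sets) the hypothesis only constrains self-adjoint elements, but the dilation $z\mapsto\left(\begin{smallmatrix}0&z\\z^*&0\end{smallmatrix}\right)$ lands back in the self-adjoint template with norm $\|z\|$, while its image has norm $\max\{\|[Id_{\K_0}\otimes u](z)\|,\|[Id_{\K_0}\otimes u](z^*)\|\}$, so complete contractivity on all of $E$ is recovered at once.

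The decisive step is placing the generators in the multiplicative domain. Writing $T_j=u(x_j)$, Kadison--Schwarz gives $\varphi(x_j^*x_j)\ge T_j^*T_j$ and $\varphi(x_jx_j^*)\ge T_jT_j^*$. In cases (ii)--(iv) the extra self-adjoint generator $h=\sum_j(x_j^*x_j+x_jx_j^*)$ lies in $E$, so $\varphi(h)=u(h)=\sum_j(T_j^*T_j+T_jT_j^*)$ (using that $\varphi|_E$ is $*$-preserving and $u$ is multiplicative); summing the Kadison--Schwarz inequalities and comparing with this exact value forces every one of them to be an equality, whence each $x_j$ (and $x_j^*$) lies in the multiplicative domain. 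As this domain is a $C^*$-subalgebra containing the generators, it is all of $A$. In the unitary cases (i) and (v) the element $h$ is a scalar and carries no information; instead I would use that $u$ is a homomorphism, so $u(x_j^*)=T_j^{-1}$. In case (i) the analytic and anti-analytic families bound $\|T_j\|\le1$ and $\|T_j^{-1}\|\le1$ directly; in case (v) the map $u|_E$, already completely contractive on $E=\mathrm{span}\{1,x_j,x_j^*\}$ by the dilation argument, is completely positive and hence $*$-preserving, so $T_j^{-1}=u(x_j^*)=T_j^*$. Either way $T_j$ is unitary and automatically lies in every multiplicative domain.

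The main obstacle is exactly this multiplicative-domain equality, and the one genuinely delicate point is case (i), where $\cl S$ supplies complete contractivity only on the non-self-adjoint analytic span $F=\mathrm{span}\{1,x_j\}$, so Arveson cannot be applied to an operator system directly. Here I would run Paulsen's $2\times2$ construction: the map $\left(\begin{smallmatrix}\lambda1&a\\b^*&\mu1\end{smallmatrix}\right)\mapsto\left(\begin{smallmatrix}\lambda1&u(a)\\u(b)^*&\mu1\end{smallmatrix}\right)$ on the Paulsen system of $F$ is unital completely positive; I extend it by Arveson to a u.c.p.\ map $\Psi$ on $M_2(A)$, observe that $e_{12}\otimes x_j$ lies in the multiplicative domain of $\Psi$ (because $x_j$ and $T_j$ are unitary), conclude that $\Psi$ is a $*$-homomorphism fixing $M_2\otimes1$, hence of the form $\mathrm{id}_{M_2}\otimes\psi$ with $\psi(x_j)=T_j$, and read off that $u=\psi|_{\cl A}$ is completely contractive. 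Finally, whenever the template also absorbs scalar compressions $\alpha_0D\alpha_1$ — as in the unitary case (i), where $\cl S$ is visibly stable under multiplication by scalar matrix contractions on both sides — Remark \ref{26.10} upgrades the conclusion to a bare product $x=P_1\cdots P_m$ with every $P_\ell\in\cl S$.
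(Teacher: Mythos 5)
Your proposal is correct, and its engine is the same as the paper's: every case is fed into Corollary \ref{26.8'} with $c=1$ by showing that a unital homomorphism $u$ dominated on $\cl S$ extends, via Arveson, to a u.c.p.\ map on $A$ that is forced to be a $*$-homomorphism agreeing with $u$, whence $\|u\|_{cb}\le 1$. The differences lie in the rigidity step and in how (iii)--(v) are deduced. Where the paper kills the Stinespring corners ($\pi_{12}(x_j)=0$, following \cite{P} and \cite{HT}), you saturate the Kadison--Schwarz inequality and quote Choi's multiplicative-domain theorem; these are the same computation in two standard packagings. In case (i) the paper cites the c.b.\ extension theorem for the non-self-adjoint span ${\rm span}\{1,x_j\}$ and then dilates, while you unpack that citation by running Paulsen's $2\times 2$ off-diagonal construction explicitly and locating $e_{12}\otimes x_j$ in the multiplicative domain of the extension on $M_2(A)$ --- equivalent, a bit more self-contained, and requiring exactly the same input (unitarity of $u(x_j)$, obtained from the two families plus multiplicativity of $u$). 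The genuine structural divergence is in (iii)--(v): the paper gets them by post-processing the factorization already produced in (ii), rewriting each factor $y$ as $(1\ 0)\left(\begin{smallmatrix}0&y\\ y^*&0\end{smallmatrix}\right)\left(\begin{smallmatrix}0\\ 1\end{smallmatrix}\right)$ with the middle term in the self-adjoint class, whereas you apply the same $2\times 2$ dilation on the \emph{hypothesis} side --- norm control over the self-adjoint class already yields complete contractivity on the full degree-one span --- and then invoke Corollary \ref{26.8'} afresh for the self-adjoint $\cl S$. Your route buys a cleaner logical picture ((iv) is observed to be literally the same set as (iii); (v) is handled directly, with no reduction to finitely many unitaries), at the price of re-checking the standing hypotheses of Corollary \ref{26.8'} for those classes; and there is one small slip in that re-checking: generation does \emph{not} hold ``because the templates contain the $x_j$ and $x_j^*$'' in cases (iii)--(v), since $x_j\notin\cl S$ when $x_j\ne x_j^*$ --- rather, the algebra generated by $(\K_0\otimes 1)\cup\cl S$ contains the compressions $(1\ 0)\left(\begin{smallmatrix}0&x_j\\ x_j^*&0\end{smallmatrix}\right)\left(\begin{smallmatrix}0\\ 1\end{smallmatrix}\right)=x_j$ (after normalization), which is the very trick you use elsewhere, so this is a one-line repair rather than a gap.
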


\begin{proof}
We first observe that in   case (ii)  the assumption
in Remark \ref{26.9} holds.
As for case (i) we may observe that
any matrix $D$ of the form
$D= \left( \begin{matrix}   y_1 & 0\\
 0&y_2 
 \end{matrix}\right)$ can be written as 
 $D= \left( \begin{matrix}   y_1 & 0\\
 0&1 
 \end{matrix}\right)\left( \begin{matrix} 1 & 0\\
 0&y_2 
 \end{matrix}\right)$, and hence since $1_{M_n}\otimes 1\in \cl S$
 for all $n\ge 1$ we may still factorize with factors in $\cl S$
 even though $\cl S$ contains terms of two types.
\\
(i) We use here the ``linearization trick" from \cite{P}. Let $E={\rm span}[1,\{x_j\mid j\ge 1\}]$. 
Let $u: \cl A\to B(H)$ be a unital homomorphism such that $\sup\nolimits_{s\in {\cal S}} \|[I_{\K_0}\otimes u](s)\| = 1$. We have clearly  $\|u_{|E}\|_{cb}=1$.
A fortiori of course  $\|u(x_j)\|\le 1$ and
since   $x_j$ unitary, we have 
 $u(x_j^*)=u(x_j^{-1}) = u(x_j)^{-1}$, and hence (since $x_j^*\in \cl S$) 
 $\|u(x_j)^{-1}\|\le 1$, so that $u(x_j)$ is unitary for all $j$.
  By Arveson's extension theorem,
 $u$ admits an extension
  $\tilde u: A \to B(H)$ with $\|\tilde u\|_{cb}=1$, and
  $\tilde u(1)=1$ implies that $\tilde u$ is completely positive (c.p. in short),
  see \cite{Pa,P4}.
  Therefore, we have an embedding $H\subset \hat H$
  and a $*$-homomorphism $\pi: A\to B(\hat H)$
  such that $\tilde u (a) = P_H\pi(a)_{|H}$ ($a\in A$).
  Writing $\hat H=H \oplus K$ and
  $$\pi(a)= \left( \begin{matrix} \tilde u (a) & \pi_{12}(a)\\
 \pi_{21}(a)& \pi_{22}(a) 
 \end{matrix}\right)$$
 it is easy to deduce from the fact that $\tilde u (x_j)$ and $\pi (x_j)$ are both unitary that
 $\pi_{12}(x_j)=\pi_{21}(x_j)=0$ for all $j$.
 In other words, 
 $\pi (x_j)$ commutes with $P_H$. Since $\{x_j\}$ generates $\cl A$,
 $H$ is invariant under $\pi(\cl A)$.
 Therefore $\tilde u$   is a  homomorphism
 (and even a $*$-homomorphism)  which must coincide with $u$.
 Thus we conclude $\|u\|_{cb}=1$ and we apply Corollary \ref{26.8'}.\\
 (ii) By decomposing them into real and imaginary parts,
 it is easy to reduce to the case when the $x_j$'s are self-adjoint, so we assume
 that $x_j=x_j^*$ for all $j$.
 Let $E$ be the linear span of $\{1,x_j, \sum x_j^2\}$.
  Let $u: \cl A\to B(H)$ be a unital homomorphism such that $\sup\nolimits_{s\in {\cal S}} \|[I_{\K_0}\otimes u](s)\| = 1$. Again  $\|u_{|E}\|_{cb}=1$, and $u$ admits
  a c.p. extension $\tilde u: \cl A \to B(H)$, which can again be written
   as before as $\tilde u (a) = P_H\pi(a)_{|H}$ ($a\in A$).
   With the same notation as earlier, but now following 
   \cite{HT}, we have for any self-adjoint $a\in E$
    $$\pi(a)= \left( \begin{matrix}   u (a) & \pi_{12}(a)\\
 \pi_{12}(a)^* & \pi_{22}(a) 
 \end{matrix}\right)$$
 and applying that for each $x_j$ as well
 as for $\sum x_j^2$ (on which $\tilde u=u$)
 we  find
  $$\pi(x_j)= \left( \begin{matrix}   u (x_j) & \pi_{12}(x_j)\\
 \pi_{12}(x_j)^* & \pi_{22}(x_j) 
 \end{matrix}\right)$$
 and also
 $\pi(\sum x^2_j)= \left( \begin{matrix}   u (\sum x^2_j) & *\\
 * & * 
 \end{matrix}\right)$.
 But then the equalities $\pi(\sum x^2_j)=\sum \pi(x_j)^2$
 and $u(\sum x^2_j)=\sum u(x_j)^2$ force $\sum \pi_{12}(x_j)\pi_{12}(x_j)^*=0$,
 and hence $ \pi_{12}(x_j)=0$ for all $j$. Again, we conclude that
 $\tilde u$ is a $*$-homomorphism equal to $u$, that $\|u\|_{cb}=1$ and we   apply Corollary \ref{26.8'}.\\
(iii) Let $\cl S_3$ be as in (iii). Let $\cl S_2$ be the corresponding class in (ii).
For any $y\in \cl S_2$ we have
$ \left( \begin{matrix} 0 & y\\
 y^*& 0
 \end{matrix}\right) \in \cl S_3$, and hence $y= \left( 1\ 0\right)\left( \begin{matrix} 0 & y\\
 y^*& 0
 \end{matrix}\right)\left( \begin{matrix} 0  \\
1
 \end{matrix}\right)$. 
 This shows that a factorization   of the form \eqref{001} 
 with $\cl S_2$ can be transformed into one 
 with $\cl S_3$.\\
(iv) Same argument as for (iii).
\\
(v) It is easy to reduce to a \emph{finite} family of unitaries, then
this is a particular case of (iv).
 \end{proof}
 \begin{rem}\label{r0} 
 The preceding argument for (i) shows that the factorization
 \eqref{00} holds even if $\cl S$ is the set
 of $x$'s with $\|x\|\le 1$ of  the form either  \eqref{01} or
 $x=x_j^*$. Indeed, using $x=x_j^*$ suffices to prove that
 $u(x_j)$ is unitary when $\sup\nolimits_{x\in {\cal S}} \|[I_{\K_0}\otimes u](x)\| = 1$.  
 \end{rem}
 
  \begin{proof}[Proof of Theorem \ref{main}]
 Just note that in case (i) (and also  in case (ii))
 we are in the situation described in Remark \ref{26.10}.
  \end{proof}
 
 \begin{rem}
Let $(A_i)_{i\in I}$ be a family of unital 
$C^*$-subalgebras of a unital
$C^*$-algebra $A$. Assume that $\cup_{i\in I} A_i$ generates $A$.
Let ${\cl P}_d$ denote the linear span of all the
  products of $d$ elements in $\cup_{i\in I} A_i$.
  Then any $x\in M_n ({\cl P}_d)$ with $\|x\|<1$ can be written
  as a product $x=P_1  \cdots P_m$
  of (possibly rectangular) matrices
  with entries in ${\cl P}_1$ such that $\|P_j\| <1$
  for all $j$. 
 This follows by  the argument used to prove
 (i) in Corollary \ref{c1} with $\cl S=\cup_{n}M_n({\cl P}_1)$. 
  \end{rem}

 \begin{rem}\label{r1}
 Let $(X_j)$ be a family of non-commuting formal variables (or indeterminates).
 By a $*$-polynomial  $P(X_j,X_j^*)$  
 in     $(X_j)$ we mean
 a linear combination of (non-commuting) products
 (including the empty product denoted by 1)
 of terms taken from $\{ X_j, X_j^*\}$.
 \end{rem}
 Let $A,B$ be  unital $C^*$-algebras.
 Let $(a_j)_{j\in I}$ (resp. $(b_j)_{j\in I}$) be  a family
 in $A$ (resp. $B$).
 We say that $(b_j)$ satisfies the relations satisfied by $(a_j)$
 if, for any $*$-polynomial  $P(X_j,X_j^*)$, the implication
 $P(a_j,a_j^*)=0 \Rightarrow P(b_j,b_j^*)=0$ holds.\\
 When dealing with random matrices, it is formally more general
 to consider the following
 ``almost sure variant": let $(X_j^{N})_{j\in I}$ be a system of random  
matrices of common size $d_N$,  we say that
$(X_j^{N})_{j\in I}$ satisfies a.s.  the relations satisfied by $(a_j)$
if for any $*$-polynomial  $P(X_j,X_j^*)$ such that 
 $P(a_j,a_j^*)=0$  we have $P(X_j^{N},{X_j^{N}}^*)=0$ almost surely.

To illustrate the use of the factorization,
we recover the following known facts (implicit in \cite{P}).

\begin{cor} 
Let $(x_j)_{j\in I}$ be a family of unitary operators in a unital $C^*$-algebra $A$.
Let $(X_j^{N})_{j\in I}$ be a system of random unitary
matrices of common size $d_N$.
We assume that $(X_j^{N})_{j\in I}$   satisfies a.s. the relations satisfied by $(x_j)$
and 
  that for any $n$ and any finitely supported family
$j\mapsto a_j \in M_n$ ($j\in I$) we have
$$\limsup_{N\to \infty} \|\sum a_0\otimes1+a_j \otimes X_j^{N}\|\le \|\sum a_0\otimes1+ a_j \otimes x_j\| \ a.s.$$
 then
 for any $n$, any finite set  $(a_k)$ in $M_n$   and any family of $*$-polynomials $P_k(X_j,X_j^*)$
  we have
 $$\limsup_{N\to \infty} \|\sum a_k \otimes P_k(X_j^{(N)},{X_j^{(N)}}^*  )\|\le \|\sum a_k \otimes P_k(x_j,x_j^*)\| \ a.s.$$
\end{cor}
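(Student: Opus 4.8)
The plan is to deduce the general polynomial estimate from the assumed degree-one estimate by means of the factorization of Theorem~\ref{main}. Set $Y=\sum_k a_k\otimes P_k(x_j,x_j^*)$ and $Y^{(N)}=\sum_k a_k\otimes P_k(X_j^{N},{X_j^{N}}^*)$, so that $Y$ is a matrix valued polynomial in the unitaries $(x_j)$ and the goal is $\limsup_N\|Y^{(N)}\|\le\|Y\|$ a.s. By homogeneity it is enough to prove the implication $\|Y\|<1\Rightarrow\limsup_N\|Y^{(N)}\|\le 1$ a.s.: applying this to $Y/c$ for a sequence $c\downarrow\|Y\|$ and intersecting the corresponding countably many full-measure events yields $\limsup_N\|Y^{(N)}\|\le\|Y\|$.

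Assume then $\|Y\|<1$. First I would factor $Y$ using Theorem~\ref{main} as realized through case (i) of Corollary~\ref{c1} (together with Remark~\ref{26.10}); this is the crucial point, because in case (i) the degree-one factors belong to a set $\cl S$ whose members are \emph{pure}, i.e.\ each factor is either $P_\ell=b_0^\ell\otimes 1+\sum_j b_j^\ell\otimes x_j$ or $P_\ell=b_0^\ell\otimes 1+\sum_j b_j^\ell\otimes x_j^*$, with $\|P_\ell\|\le 1$ and with (possibly rectangular) scalar coefficients. Thus $Y=P_1\cdots P_m$. The next step is to transport this identity to the random matrices. Since $Y-P_1\cdots P_m=0$ holds in $M_n(\cl A)$, each of its finitely many entries is a $*$-polynomial in $(x_j,x_j^*)$ that vanishes in $A$; as $(X_j^{N})$ satisfies a.s.\ the relations satisfied by $(x_j)$, each such entry also vanishes at $(X_j^{N},{X_j^{N}}^*)$ a.s. Intersecting over the finitely many entries and the countably many $N$, we obtain a single full-measure event on which $Y^{(N)}=P_1^{(N)}\cdots P_m^{(N)}$ for every $N$, where $P_\ell^{(N)}$ denotes the same degree-one expression evaluated at $(X_j^{N},{X_j^{N}}^*)$.

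On this event $\|Y^{(N)}\|\le\prod_\ell\|P_\ell^{(N)}\|$, and since the sequences $\|P_\ell^{(N)}\|$ are bounded (the $X_j^{N}$ being unitary) the $\limsup$ of the product is at most the product of the $\limsup$'s; so it remains to check $\limsup_N\|P_\ell^{(N)}\|\le\|P_\ell\|$ for each $\ell$. If $P_\ell$ is of the first type this is the hypothesis itself, after padding its rectangular coefficients with zeros to make them square, which alters neither $\|P_\ell^{(N)}\|$ nor $\|P_\ell\|$. If $P_\ell$ is of the second type I would pass to adjoints: $(P_\ell^{(N)})^*=(b_0^\ell)^*\otimes 1+\sum_j(b_j^\ell)^*\otimes X_j^{N}$ is of the first type, so the hypothesis yields $\limsup_N\|P_\ell^{(N)}\|=\limsup_N\|(P_\ell^{(N)})^*\|\le\|(b_0^\ell)^*\otimes 1+\sum_j(b_j^\ell)^*\otimes x_j\|=\|P_\ell\|$. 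Combining the factors gives $\limsup_N\|Y^{(N)}\|\le\prod_\ell\|P_\ell\|\le 1$ a.s., as required.

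The step I expect to be decisive is the transport of the factorization: the identity $Y=P_1\cdots P_m$ is a fixed \emph{algebraic} relation among the $(x_j,x_j^*)$, so the hypothesis that $(X_j^{N})$ satisfies these relations a.s.\ is exactly what is needed to move it to the random matrices and thereby collapse the whole estimate onto the degree-one case. The remaining points are genuinely minor: reducing anti-holomorphic factors to holomorphic ones via the adjoint works precisely because case (i) of Corollary~\ref{c1} produces pure-type factors (a mixed degree-one factor would not be stable under adjunction), and the zero-padding is only there to match rectangular coefficients to the square coefficients in the stated hypothesis; the almost-sure bookkeeping is routine, as only countably many null events enter.
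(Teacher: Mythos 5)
Your proof is correct and follows essentially the same route as the paper's: reduce by homogeneity, factor via Corollary \ref{c1}(i) into degree-one factors (you use the pure-product form of Remark \ref{26.10}, the paper the equivalent form of Remark \ref{26.9} with scalar factors $\alpha_\ell$), transport the algebraic identity to the random matrices entrywise using the a.s.-relations hypothesis, and handle anti-holomorphic factors by passing to adjoints. The only difference is that you make explicit the countable-intersection and zero-padding bookkeeping that the paper leaves implicit ("the conclusion is now immediate").
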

\begin{proof} 
Let $x=   \sum a_k \otimes P_k(x_j,x_j^*)$
and 
$x^{(N)} =   \sum a_k \otimes P_k(X^{(N)} _j, {X_j^{(N)}}^*  )$. By homogeneity we may
assume $\|x\|< 1$.
By Corollary \ref{c1} we have 
a factorization
$x =
\alpha_0D_1\alpha_1\ldots D_m\alpha_m$
with all factors $D_0,D_1,...$ 
such that either $D$ or $D^*$ is of the form 
$ a_0\otimes1+  \sum a_j \otimes x_j$ with $\|D\|\le 1$
as in Remark \ref{26.9}.
By our assumption on the relations satisfied by $(X_j^{N})_{j\in I}$  (applied to each entry of the matrix $x -
\alpha_0D_1\alpha_1\ldots D_m\alpha_m$)  we have almost surely
$$x^{(N)} =
\alpha_0D^{(N)}_1\alpha_1\ldots D^{(N)}_m\alpha_m$$
where $D^{(N)} _j$ is obtained from $D_j$
by replacing $x_j$ (resp. $x_j^*$) by $X^{(N)} _j$
(resp. ${X^{(N)} _j}^*$) wherever it appears.
This implies
$$\| x^{(N)} \| < (\max_\ell \| D_\ell^{(N)} \| )^m .$$
The conclusion is now immediate.
\end{proof}
\begin{rem} Let  $(x_j)_{j\in I}$ be a family of free Haar unitaries in the sense
of \cite{VDN}. If  a $*$-polynomial satisfies $P(x_j,x_j^*)=0$ then
$P(y_j,y_j^*)=0$ for any family $(y_j)$
of unitaries in a $C^*$-algebra, in particular for any family
 of unitary matrices.
 Thus the assumption on the relations in the preceding
corollary is automatically satisfied if we assume that $(X_j^{N})_{j\in I}$ is formed
of unitary matrices.
\end{rem}

\begin{rem}\label{r2} A similar statement is valid if we replace a.s. convergence by
convergence in probability.
More explicitly,
if we assume that
for any $\vp>0$ and any $a_j$ we have
$$\lim\nl_{N\to\infty} \PP(\{ \|a_0\otimes1+\sum a_j \otimes X_j^{N}\|> \|a_0\otimes1+\sum a_j \otimes x_j\|+\vp\} ) =0$$
then the same argument shows that
for any $\vp>0$, 
 any $n$, any finite set  $(a_k)$ in $M_n$   and any family of $*$-polynomials $P_k(X_j,X_j^*)$
 we have
$$\lim\nl_{N\to\infty} \PP(\{\|\sum a_j \otimes P_j(X^{(N)} _j, {X_j^{(N)}}^*  ) \|> \|\sum a_j \otimes P_j(x_j,x_j^*)\|+\vp\} ) =0   .$$
\end{rem}
\begin{cor} In the situation of the preceding Corollary,
 Assume that for any $n$, any self-adjoint
 $ a_0 \in M_n$ 
and any finite family $(a_j)$ in $M_n$    
  we have
$$\limsup_{N\to \infty} \| a_0\otimes 1 +\sum  a_j \otimes X_j^{N}+\sum  a^*_j \otimes {X_j^{N}}^*
\|\le
  \| a_0\otimes 1 +\sum  a_j \otimes x_j +\sum  a^*_j \otimes {x_j }^*
\|\ a.s.$$
 then for any $n$, any finite set  $(a_k)$ in $M_n$   and any family of $*$-polynomials $P_k(X_j,X_j^*)$
 we have
 $$\limsup_{N\to \infty} \|\sum a_k \otimes P_k(X^{(N)} _j, {X_j^{(N)}}^*  )\|\le \|\sum a_k \otimes P_k(x_j,x_j^* )\| \ a.s.$$
 A similar statement holds for convergence in probability
 as in Remark \ref{r2}.
\end{cor}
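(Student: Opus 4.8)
The plan is to run the same argument as in the preceding Corollary, but to feed it the \emph{self-adjoint} degree-one factorization of Corollary \ref{c1}(v) rather than the one-sided factorization of case (i). Write $x=\sum_k a_k\otimes P_k(x_j,x_j^*)$ and $x^{(N)}=\sum_k a_k\otimes P_k(X_j^{(N)},{X_j^{(N)}}^*)$. Since the asserted inequality $\limsup_N\|x^{(N)}\|\le\|x\|$ is positively homogeneous of degree one in the coefficients $(a_k)$, I would first reduce, by passing from $x$ to $x/t$ for rational $t>\|x\|$ and letting $t\downarrow\|x\|$ along a sequence (so that only a countable intersection of almost sure events is involved), to proving the implication $\|x\|<1\Rightarrow\limsup_N\|x^{(N)}\|\le 1$ almost surely.

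So assume $\|x\|<1$. The key step is to invoke Corollary \ref{c1}(v) with $\cl S$ the set of self-adjoint degree-one elements of the form \eqref{3}: the factorization \eqref{001} then applies and yields $x=\alpha_0D_1\alpha_1\cdots D_m\alpha_m$ with scalar blocks $\alpha_\ell\in\K_0\otimes 1$ satisfying $\|\alpha_\ell\|<1$ (as arranged in Remark \ref{26.9}) and with each $D_\ell\in\cl S$ self-adjoint, of the form $a_0(\ell)\otimes 1+\sum_j a_j(\ell)\otimes x_j+\sum_j a_j(\ell)^*\otimes x_j^*$ and with $\|D_\ell\|\le 1$. The whole reason for using case (v) here is that each such $D_\ell$ is exactly of the type whose matrix-model norm is controlled by the standing hypothesis of the present Corollary.

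I would then transport this identity to the matrix model. Each entry of $x-\alpha_0D_1\alpha_1\cdots D_m\alpha_m$ is a scalar $*$-polynomial in $(x_j)$ that vanishes; since $(X_j^{(N)})$ satisfies almost surely the relations satisfied by $(x_j)$, and there are only finitely many entries and finitely many factors, we obtain almost surely $x^{(N)}=\alpha_0D_1^{(N)}\alpha_1\cdots D_m^{(N)}\alpha_m$, where $D_\ell^{(N)}$ replaces $x_j,x_j^*$ by $X_j^{(N)},{X_j^{(N)}}^*$. Each $D_\ell^{(N)}=a_0(\ell)\otimes 1+\sum_j a_j(\ell)\otimes X_j^{(N)}+\sum_j a_j(\ell)^*\otimes{X_j^{(N)}}^*$ is precisely a self-adjoint degree-one expression, so the hypothesis gives $\limsup_N\|D_\ell^{(N)}\|\le\|D_\ell\|\le 1$ almost surely for each $\ell$. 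Submultiplicativity of the norm together with $\prod_\ell\|\alpha_\ell\|<1$ then yields $\|x^{(N)}\|<(\max_\ell\|D_\ell^{(N)}\|)^m$ almost surely, and taking $\limsup_N$ over the finitely many factors gives $\limsup_N\|x^{(N)}\|\le 1$ almost surely, as required. The convergence-in-probability version follows by the identical manipulation, replacing the almost sure bounds on the $\|D_\ell^{(N)}\|$ by the probabilistic ones of Remark \ref{r2}.

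I do not expect a genuine obstacle: the norm estimate and the homogeneity reduction are routine, and the only point that really matters is selecting the correct factorization. The hypothesis of this Corollary concerns self-adjoint combinations $a_0\otimes 1+\sum a_j\otimes X_j^{(N)}+\sum a_j^*\otimes{X_j^{(N)}}^*$, and Corollary \ref{c1}(v) is exactly the factorization whose degree-one factors have this self-adjoint shape; case (i), used in the preceding Corollary, would instead produce one-sided factors $a_0\otimes 1+\sum a_j\otimes x_j$ that the present hypothesis does not directly bound.
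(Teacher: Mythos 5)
Your proposal is correct and is essentially the paper's intended argument: the paper gives no separate proof for this corollary precisely because it is proved by rerunning the preceding corollary's proof verbatim (homogeneity reduction, transport of the factorization to the matrix model via the a.s.\ relations, and the submultiplicative norm bound), only substituting the self-adjoint degree-one factorization of Corollary \ref{c1}(v) (equivalently (iv)) for case (i), which is exactly the substitution you identify as the crux.
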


\begin{rem} Similar statements hold
for the cases (ii) (iii) (iv) of Corollary \ref{c1}.
This can be applied in particular when $(x_j)$ is a free semi-circular
(or circular) family in the sense of \cite{VDN}.

 \end{rem}

\n{\bf Questions} One major drawback of the method 
to prove factorizations such as \eqref{001} is the lack of
an algorithm allowing one to construct the factors out of the data
that we wish to factorize.
Perhaps a different approach may yield this.\\
Another natural question would be the quest for quantitative estimates of the length of the factorization.
For instance, given a family of unitaries $(x_j)$
(generating a  unital $*$-algebra $\cl A$)
and taking $\cl S$ formed of degree 1 polynomials
as in part (i) or part (v) of Corollary \ref{c1},  
one can ask for estimates (upper and lower)
for the smallest  number $m=m(d,n)$ (resp.  $m=m(d,n,\vp)$ for $\vp>0$ fixed)
satisfying the following: any matricial polynomial
$P\in M_n({\cl A})$  with $\|P\|<1$ of degree at most $d$
can be written as a product $P= P_1\ldots P_m$
of $m$ matricial polynomials of degree at most $1$
with $\|P_\ell\|<1$ for all $\ell$
 (resp.   with   $ \prod_1^m  \|P_\ell\|   <1+\vp$).

\end{document}